\documentclass[11pt, a4paper]{article}

\usepackage{amsmath,amsfonts,amssymb}
\usepackage{bbm}
\usepackage{cases}

\usepackage{kotex}
\usepackage{lipsum}
\usepackage{subcaption}

\usepackage{xcolor}
\usepackage{graphicx}
\usepackage{ulem}

\newtheorem{defn}{Definition}[section]
\newtheorem{thm}[defn]{Theorem}
\newtheorem{lem}[defn]{Lemma}

\newtheorem{cor}[defn]{Corollary}
\newtheorem{rmk}[defn]{Remark}

\newcommand\blfootnote[1]{%
  \begingroup
  \renewcommand\thefootnote{}\footnote{#1}%
  \addtocounter{footnote}{-1}%
  \endgroup
}

\newenvironment{proof}{{\bf Proof }}{{\vskip 0.1cm \hfill$\Box$}}

\begin{document} 

\noindent
{\Large \bf Notes on linear elliptic equations with $L^2$-gradient perturbations and singular zero-order coefficients}
\\ \\
\bigskip
\noindent
{\bf Haesung Lee}  \\
\noindent
\blfootnote{This research was supported by Kumoh National Institute of Technology (2025\,$\thicksim$\,2026).}
\noindent
{\bf Abstract.}  
In this paper, we study the existence, uniqueness, and quantitative estimates for weak solutions to linear elliptic Dirichlet problems of the form
\[
-\operatorname{div}(\gamma \nabla u)+\langle \nabla\phi+\mathbf{H},\nabla u\rangle+(c+\alpha)u=f \quad\text{ in }U,
\quad\;
u=0  \quad\text{on }\partial U,
\]
where $U\subset \mathbb{R}^d$ is bounded, $\gamma\in[1,\infty)$ is a constant, $\phi\in H^{1,2}(U)\cap L^\infty(U)$, $\mathbf{H}\in L^p(U,\mathbb{R}^d)$ for some $p \in (d, \infty)$, and $c\in L^1(U)$ with $c\ge0$. A key feature of this setting is that the drift contains the low-regularity term $\nabla\phi$, which is only assumed to belong to $L^2(U,\mathbb{R}^d)$, while the zero-order coefficient is merely integrable. We prove that, even under these rough assumptions, well-posedness and quantitative energy and $L^2$ estimates remain valid. In addition, by using a previously established interpolation result, we characterize a trade-off between the integrability of the source term $f$ and that of the zero-order coefficient $c$, and show that well-posedness together with the corresponding quantitative estimates hold under these interpolated assumptions.
\\ \\
\noindent
{Mathematics Subject Classification (2020): {Primary: 35J15, 35J25, Secondary: 35B45, 65N15}}\\

\noindent 
{Keywords: existence, regularity, uniqueness, elliptic equation, boundary value problems, singular coefficients
}

\section{Introduction}
In this paper, we investigate the existence, uniqueness, and quantitative estimates for weak solutions to the elliptic equation
\begin{equation} \label{maineqoursend}
\left\{
\begin{aligned}
-\operatorname{div}(\gamma \nabla u) + \langle \nabla \phi+ \mathbf{H}, \nabla u\rangle + (c+\alpha)\,u \;&=\; f
&\quad& \text{in } U,\\
u \;&=\; 0
&\quad& \text{on } \partial U.
\end{aligned}
\right.
\end{equation}
We work under the following main assumption: \\ \\
\noindent{\bf (Hy)}\; $\gamma \in [1,\infty)$ is a constant, $U \subset \mathbb{R}^d$ ($d \ge 2$) is a bounded open set, and $B_r(x_0)$ is an open ball satisfying $\overline{U} \subset B_r(x_0)$.
The potential function $\phi$ belongs to $H^{1,2}(U)\cap L^\infty(U)$, the vector field $\mathbf{H}$ belongs to $L^{p}(U, \mathbb{R}^d)$ for some $p\in(d,\infty)$, and the zero-order coefficient $c$ belongs to $L^1(U)$ with $c \geq 0$. \\  \\
The assumption $\phi\in H^{1,2}(U)\cap L^\infty(U)$ is essential for the exponential transformation introduced below. Throughout this paper, $\sup_U\phi$ and $\inf_U\phi$ denote the essential supremum and essential infimum of $\phi$ on $U$, respectively. \\ \\
By a weak solution to \eqref{maineqoursend}, we mean a function $u\in H^{1,2}_0(U)$ with $cu\in L^1(U)$ satisfying
$$
\int_U \left(
\langle \gamma\nabla u,\nabla\varphi\rangle
+\langle \nabla\phi+\mathbf H,\nabla u\rangle\varphi
+(c+\alpha)u\varphi
\right)\,dx=
\int_U f\varphi\,dx
$$
for every $\varphi\in C_0^\infty(U)$. The main contribution of this paper is not only to establish the
well-posedness of weak solutions under assumption {\bf (Hy)} and suitable integrability assumptions on the source term $f$, but also to derive explicit quantitative $L^2$ estimates of the form
\begin{equation} \label{quantitiestim3}
\|u\|_{L^2(U)} \leq C \|f\|_{L^2(U)}.
\end{equation}
A notable feature of this estimate is that an explicit expression for $C$ is obtained in terms of $K_1$ and the relevant parameters, and $C$ is independent of both $\|\nabla\phi\|_{L^2(U)}$ and $\|c\|_{L^1(U)}$. In particular, the dependence on the potential $\phi$ enters only through its oscillation (see Theorem \ref{thm:1.1}).
Consequently, the resulting $L^2$ estimate remains robust even when the $L^2$-norm of the low-regularity gradient $\nabla\phi$ is large, as long as the oscillation of $\phi$ is controlled. This feature is particularly useful for a posteriori error analysis, since the corresponding error constant depends on the oscillation of $\phi$ rather than on $\|\nabla\phi\|_{L^2(U)}$.
This robustness is particularly relevant to a posteriori error analysis for physics-informed neural networks (PINNs). Indeed, under the assumptions of Corollary \ref{cor:aposteriori}, the error $u-u_\theta$ satisfies the corresponding homogeneous Dirichlet problem with source term $L_\theta$, where $L_\theta$ is the residual defined in that corollary. Consequently, estimate \eqref{quantitiestim3} yields the computable a posteriori error bound
$$
\|u-u_\theta\|_{L^2(U)} \leq C\|L_\theta\|_{L^2(U)}.
$$ 
Such stability estimates provide a direct analytical mechanism for converting the residual loss of a PINN into a rigorous error bound. Related applications of quantitative stability and residual estimates to PINN error analysis can be found in \cite{YL24,L25,MM23}, while more general a posteriori error frameworks for elliptic problems and linear PDEs have been developed in \cite{BCP22,ZMM25}.\\
Our goal is to establish a well-posedness result for \eqref{maineqoursend} that remains robust in this non-symmetric setting and yields quantitative estimates with explicit dependence on the coefficients. Specifically, we quantify how the oscillation of the potential, $\sup_U \phi-\inf_U \phi$, affects the constants in the quantitative estimates through the variation of the effective weight.  A key novelty of the present work is that the drift component $\nabla \phi$ is only assumed to have $L^2(U,\mathbb{R}^d)$ regularity. Moreover, the zero-order coefficient is required to belong merely to $L^1(U)$ in the main well-posedness result, a range that lies beyond the standard assumptions in classical elliptic theories and related works on equations with lower-order terms (see, for example, \cite{S65,T73,GT01,Sha06,GGM13}). 
In Theorem \ref{stamconnec}, there is a substantial trade-off between the integrability imposed on the general vector field $\mathbf H$ and that imposed on the zero-order coefficient $c$. For instance, for $d\ge3$, Theorem \ref{stamconnec} requires $\mathbf{H}\in L^d(U,\mathbb{R}^d)$ and $c\in L^{\frac d2}(U)$ for well-posedness of \eqref{maineqoursend}. In contrast, our main result, Theorem \ref{thm:1.1}, establishes well-posedness under the assumption {\bf (Hy)}, namely under $\mathbf{H} \in L^p(U, \mathbb{R}^d)$ with $p \in (d, \infty)$, while assuming only $c \in L^1(U)$.\\
Unlike Theorem \ref{stamconnec}, Theorem \ref{thm:1.1} provides not only existence and uniqueness, but also a comprehensive quantitative analysis of the weak solution $u$. We now explain more explicitly how the present work differs from and builds on the closely related results \cite{L25jm}, \cite{L26c}, and \cite{L26y}.
In \cite{L25jm}, a well-posedness framework was developed for linear elliptic equations with general drift coefficients $\mathbf{H} \in L^p(U, \mathbb{R}^d)$ and $L^1$-zero-order terms. Related results for elliptic equations with singular drift terms under divergence-type conditions were obtained in \cite{L24,L25ej}. The present paper differs from \cite{L25jm} in that the drift additionally contains the gradient component $\nabla\phi$ with only $L^2(U,\mathbb{R}^d)$ regularity. The exponential transformation introduced in Theorem \ref{divfreeliktrans} absorbs this low-regularity gradient term into the diffusion coefficient and thereby makes it possible to apply the framework of \cite{L25jm} to the present equation.\\
In \cite{L26c}, quantitative $L^2$ estimates were established for linear elliptic equations via a divergence-free transformation. In the present paper, these quantitative techniques are applied to the transformed weighted equation and the resulting estimates are then transferred back to the original equation containing the additional $L^2$-gradient drift $\nabla\phi$. In particular, a notable feature of the resulting estimates is that their constants depend on the potential $\phi$ through its oscillation $\sup_U\phi-\inf_U\phi$, rather than through $\|\nabla\phi\|_{L^2(U)}$, and are independent of $\|c\|_{L^1(U)}$.\\
Finally, \cite{L26y} provides an interpolation result for well-posedness under different integrability assumptions on the source term and the zero-order coefficient. By combining this interpolation result with the exponential transformation of the present paper, Theorem \ref{thm:interpolation} establishes the corresponding trade-off between the integrability of $f$ and that of $c$ even in the presence of the additional $L^2$-gradient drift $\nabla\phi$. Moreover, using \cite[Theorem 1.1]{L26c}, we show that the estimates \eqref{quantitiestim-1} and \eqref{quantitiestim0} remain valid under these interpolated assumptions, while the $L^2$-based estimates \eqref{quantitiestim1}, \eqref{quantitiestim2}, and \eqref{quantitiestim3} hold under the additional assumption $f\in L^2(U)$.
\\
Central to our approach is the transformation introduced in Theorem \ref{divfreeliktrans}, which converts \eqref{maineqoursend} into the equivalent equation \eqref{maineqoutrans}. This transformation is motivated by the treatment of distorted Brownian motion in \cite{F81}, where drift terms with merely $L^2$ regularity are considered. It absorbs the $L^2$-singular drift $\nabla \phi$ into the diffusion coefficients and hence makes it possible to apply the analytical framework established in \cite{L25jm}. The appearance of the $L^2$-gradient perturbation is natural from the viewpoint of weighted energy forms and distorted Brownian motion. Low-regularity drift terms of this type can often be interpreted through changes of reference measure, weighted divergence structures, and associated Dirichlet form methods; see \cite{F81,FOT11,MR92}. The use of positive densities and divergence-type transformations is also closely related to the analytic study of invariant measures and stationary Fokker--Planck equations for singular diffusions; see, for example, \cite{BKR01} and \cite{BRS12}.
\\
The results show that the well-posedness theory for \eqref{maineqoursend} remains valid even in the presence of a drift with a low-regularity component in $L^2(U,\mathbb{R}^d)$ and a merely integrable zero-order term, while still yielding explicit quantitative estimates. They may also be viewed as analytic foundations for elliptic problems arising from singular diffusion models and stationary Fokker--Planck type equations. In particular, the quantitative estimates obtained here are relevant to the study of non-symmetric elliptic operators with low-regularity lower-order terms, which appear naturally in stochastic dynamics and applied mathematical models. \\
Our proofs are based on the exponential transformation introduced in Theorem \ref{divfreeliktrans}, which absorbs the $L^2$-gradient term $\nabla\phi$ into the diffusion coefficient. This allows us to apply the well-posedness framework of \cite{L25jm} and the quantitative estimates of \cite{L26c} to the transformed equation and then transfer the resulting conclusions back to the original problem. The same transformation, combined with the interpolation result of \cite[Theorem 5.1]{L26y}, also yields the trade-off between the integrability of $f$ and that of $c$ stated in Theorem \ref{thm:interpolation}.

\section{Main results}
\begin{thm}\label{divfreeliktrans}
Let $U$ be a bounded open subset of $\mathbb{R}^d$ with $d\ge 2$, and let $\gamma \in [1,\infty)$ be a constant.
Let $u\in H^{1,2}(U)$, $\phi\in H^{1,2}(U)\cap L^\infty(U)$, $\mathbf{H}\in L^2(U, \mathbb{R}^d)$, $c\in L^1(U)$ with $cu\in L^1(U)$ and $f \in L^1(U)$.
Fix a constant $k>0$ and set
\[
\eta := k e^{-\frac{\phi}{\gamma}}\qquad\text{in }U.
\]
Then the following are equivalent:
\begin{itemize}
\item[(i)]
\[
\int_U \Big(\langle \gamma \nabla u,\nabla \varphi\rangle
      + \langle \nabla \phi+\mathbf{H}, \nabla u\rangle\,\varphi
      + c\,u\,\varphi \Big)\,dx
= \int_U f\,\varphi\,dx,
\qquad \forall\,\varphi \in C_0^\infty(U).
\]
\item[(ii)]
\[
\int_U \Big(\langle \gamma \eta \nabla u, \nabla \psi\rangle
      + \langle \eta \mathbf{H}, \nabla u\rangle\,\psi
      + \eta c\,u\,\psi \Big)\,dx
= \int_U \eta f\,\psi\,dx,
\qquad \forall\,\psi \in C_0^\infty(U).
\]
\end{itemize}
\end{thm}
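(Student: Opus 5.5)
The plan is to substitute $\varphi=\eta\psi$ in (i), and $\psi=\eta^{-1}\varphi$ in (ii), and use the chain-rule identity $\gamma\nabla\eta=-\eta\nabla\phi$. The only real issue is that $\eta\psi$ is not in $C_0^\infty(U)$, so the test classes must first be enlarged by density.

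\textbf{Properties of $\eta$.} Since $\phi\in H^{1,2}(U)\cap L^\infty(U)$ and $t\mapsto ke^{-t/\gamma}$ is smooth and Lipschitz on the bounded interval $[\inf_U\phi,\sup_U\phi]$, the chain rule for Sobolev functions gives $\eta\in H^{1,2}(U)\cap L^\infty(U)$, with
\[
\nabla\eta=-\tfrac{1}{\gamma}\eta\nabla\phi, \qquad 0<ke^{-\sup_U\phi/\gamma}\le\eta\le ke^{-\inf_U\phi/\gamma}.
\]
Likewise $\eta^{-1}=k^{-1}e^{\phi/\gamma}\in H^{1,2}(U)\cap L^\infty(U)$. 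Consequently, for $\psi\in C_0^\infty(U)$ the product $\varphi:=\eta\psi$ lies in $H^{1,2}_0(U)\cap L^\infty(U)$, has compact support in $U$, and satisfies $\nabla\varphi=\eta\nabla\psi-\tfrac1\gamma\eta\psi\nabla\phi$. The same holds with the roles of $\eta$ and $\eta^{-1}$ exchanged.

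\textbf{Extending the test class (main obstacle).} I would show that (i) continues to hold for every $\varphi\in H^{1,2}_0(U)\cap L^\infty(U)$ with compact support in $U$. Take a standard mollification $\varphi_n=\varphi*\rho_{1/n}$. For large $n$ these lie in $C_0^\infty(U)$, are supported in a fixed compact set, satisfy $\|\varphi_n\|_\infty\le\|\varphi\|_\infty$, and converge to $\varphi$ in $H^{1,2}$ and, along a subsequence, a.e. We then pass to the limit term by term:
\begin{itemize}
\item The term $\int\gamma\langle\nabla u,\nabla\varphi_n\rangle$ converges by $L^2$ convergence of $\nabla\varphi_n$.
\item In $\int\langle\nabla\phi+\mathbf H,\nabla u\rangle\varphi_n$, the integrand is dominated by $|\nabla\phi+\mathbf H||\nabla u|\,\|\varphi\|_\infty\in L^1$, since $\nabla\phi,\mathbf H,\nabla u\in L^2$. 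It converges a.e., so dominated convergence applies.
\item The terms $\int cu\varphi_n$ and $\int f\varphi_n$ are handled the same way, using $cu,f\in L^1$ and the uniform bound on $\varphi_n$.
\end{itemize}
This is exactly where the hypotheses $\mathbf H\in L^2$, $cu\in L^1$ and $f\in L^1$ are used. The identical argument extends (ii), whose coefficients $\eta\mathbf H$, $\eta cu$ and $\eta f$ have the same integrability because $\eta$ is bounded.

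\textbf{(i)$\Rightarrow$(ii).} Given $\psi\in C_0^\infty(U)$, insert $\varphi=\eta\psi$ into the extended (i). The diffusion term becomes
\[
\int\gamma\eta\langle\nabla u,\nabla\psi\rangle-\int\eta\psi\langle\nabla u,\nabla\phi\rangle .
\]
Its second piece cancels exactly the $\nabla\phi$-part of the drift term $\int\langle\nabla\phi,\nabla u\rangle\eta\psi$. What remains is precisely (ii).

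\textbf{(ii)$\Rightarrow$(i).} Given $\varphi\in C_0^\infty(U)$, insert $\psi=\eta^{-1}\varphi$ into the extended (ii). Using $\nabla\eta^{-1}=\tfrac1\gamma\eta^{-1}\nabla\phi$, we get
\[
\gamma\eta\nabla\psi=\gamma\nabla\varphi+\varphi\nabla\phi .
\]
The diffusion term therefore becomes $\int\gamma\langle\nabla u,\nabla\varphi\rangle+\int\langle\nabla\phi,\nabla u\rangle\varphi$. Together with the remaining terms, where $\eta\eta^{-1}=1$, this yields (i).
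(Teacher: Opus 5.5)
Your proposal is correct and follows the paper's proof: test (i) with $\eta\psi$ and (ii) with $\varphi/\eta$, and use $\gamma\nabla\eta=-\eta\nabla\phi$ to cancel the $\nabla\phi$-drift. The only difference is that you prove the enlargement of the test class yourself, by mollifying compactly supported bounded $H^{1,2}$ functions and applying dominated convergence, whereas the paper cites \cite[Proposition A.8]{L25jm} to obtain all of $H^{1,2}_0(U)\cap L^\infty(U)$ at once.
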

\begin{proof}
(i) $\Rightarrow$ (ii): Assume (i) holds. By \cite[Proposition A.8]{L25jm}, (i) holds for all $\varphi \in H^{1,2}_0(U) \cap L^{\infty}(U)$.
Let $\psi \in C_0^{\infty}(U)$ be arbitrary. Then, $\eta \psi \in H^{1,2}_0(U) \cap L^{\infty}(U)$.  Thus,
\[
\int_U \bigl\langle \gamma \nabla u, \nabla (\eta \psi) \bigr \rangle + \langle \nabla \phi+\mathbf{H}, \nabla u \rangle\,\eta \psi      + c\,u\,\eta \psi\, dx
= \int_U f\,\eta \psi \, dx.
\]
Observe that
\begin{align*}
&\int_U \bigl\langle \gamma \nabla u, \nabla (\eta \psi) \bigr \rangle + \langle \nabla \phi+ \mathbf{H}, \nabla u \rangle\,\eta \psi      + c\,u\,\eta \psi\, dx \\
&=\int_U \bigl\langle \gamma \eta \nabla u, \nabla \psi \bigr\rangle + \bigl\langle \gamma \nabla u, \nabla \eta \bigr\rangle \psi  + \langle \eta \nabla\phi, \nabla u \rangle \psi  + \langle \eta \mathbf{H}, \nabla u\rangle\,\psi      +\eta  c\,u\,\psi \, dx \\
&=\int_U \bigl\langle \gamma \eta \nabla u, \nabla \psi \bigr\rangle + \langle \mathbf{H}, \nabla u \rangle\,\eta \psi      + c\,u\,\eta \psi\, dx,
\end{align*}
and hence (ii) follows.\\ \\
(ii) $\Rightarrow$ (i):  Assume (ii) holds. By \cite[Proposition A.8]{L25jm}, (ii) holds for all $\psi \in H^{1,2}_0(U) \cap L^{\infty}(U)$. Let $\varphi\in C_0^{\infty}(U)$ be arbitrary. Then, $ \frac{\varphi}{\eta} \in H^{1,2}_0(U) \cap L^{\infty}(U)$. Thus,
$$
\int_U \bigl\langle \gamma \eta \nabla u, \nabla \left(\frac{\varphi}{\eta} \right) \bigr\rangle
      + \langle  \mathbf{H}, \nabla u\rangle\,\varphi  + c\,u\,\varphi \, dx
= \int_U f\,\varphi \, dx.
$$
Note that
\begin{align*}
&\int_U \bigl\langle \gamma \eta \nabla u, \nabla \left(\frac{\varphi}{\eta} \right) \bigr\rangle
      + \langle  \mathbf{H}, \nabla u\rangle\,\varphi  + c\,u\,\varphi \, dx\\
&= \int_U \bigl\langle \gamma \nabla u, \nabla \varphi \bigr\rangle
      + \langle \frac{-\gamma}{\eta}\nabla \eta+\mathbf{H}, \nabla u\rangle\,\varphi      + c\,u\,\varphi \, dx \\
&= \int_U \bigl\langle \gamma \nabla u, \nabla \varphi \bigr\rangle
      + \langle \nabla \phi+\mathbf{H}, \nabla u\rangle\,\varphi      + c\,u\,\varphi \, dx,
\end{align*}
and hence (i) follows.
\end{proof}

\begin{thm} \label{stamconnec}
Assume that $U\subset\mathbb{R}^d$ is a bounded open set with $d\ge 3$ and $\gamma \in [1, \infty)$, $\alpha \in [0, \infty)$ are constants. Let $\phi \in L^{\infty}(U) \cap H^{1,2}(U)$, $\mathbf{H}\in L^{d}(U, \mathbb{R}^d)$, $c\in L^{\frac{d}{2}}(U)$ with $c\ge 0$ a.e. in $U$ and $f \in L^{\frac{2d}{d+2}}(U)$. Then there exists a unique weak solution $u$ to \eqref{maineqoursend}.
\end{thm}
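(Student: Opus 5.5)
We reduce the statement to the transformed weighted problem of Theorem \ref{divfreeliktrans} and solve that problem with the classical Stampacchia/Lax--Milgram approach for drifts in $L^d$ and potentials in $L^{d/2}$. Take $k=1$ and $\eta=e^{-\phi/\gamma}$. Since $\phi\in H^{1,2}(U)\cap L^\infty(U)$, we have
\[
\lambda:=e^{-\sup_U\phi/\gamma}\le \eta\le e^{-\inf_U\phi/\gamma}=:\Lambda ,
\]
and $\eta\in H^{1,2}(U)\cap L^\infty(U)$. The coefficients of the transformed equation are therefore $\gamma\eta$ (uniformly elliptic and bounded), $\eta\mathbf H\in L^d(U,\mathbb R^d)$, $\eta(c+\alpha)\in L^{d/2}(U)$ with $\eta(c+\alpha)\ge0$, and $\eta f\in L^{\frac{2d}{d+2}}(U)$. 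In this setting the weak formulation is meaningful on all of $H^{1,2}_0(U)$. By Sobolev and H\"older, $(c+\alpha)u\varphi\in L^1$ and $\langle\mathbf H,\nabla u\rangle\varphi\in L^1$. We also have $cu\in L^{\frac{2d}{d+2}}\subset L^1$, so the integrability requirement in the definition of a weak solution holds automatically. By Theorem \ref{divfreeliktrans} (applied with $c+\alpha$ in place of $c$), $u\in H^{1,2}_0(U)$ is a weak solution of \eqref{maineqoursend} if and only if it solves the weighted problem (ii). It therefore suffices to prove existence and uniqueness for
\[
B(u,\psi):=\int_U \gamma\eta\langle\nabla u,\nabla\psi\rangle+\eta\langle\mathbf H,\nabla u\rangle\psi+\eta(c+\alpha)u\psi\,dx=\int_U\eta f\psi\,dx .
\]

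The first step is to check that $B$ is bounded on $H^{1,2}_0(U)$ and that $\psi\mapsto\int\eta f\psi$ is a bounded functional; both follow from the Sobolev embedding $H^{1,2}_0\hookrightarrow L^{2^*}$ with $2^*=\frac{2d}{d-2}$. Coercivity fails in general because $\mathbf H$ is not small, so we do not apply Lax--Milgram directly. Instead we use the standard splitting $\mathbf H=\mathbf H_1+\mathbf H_2$ with $\|\mathbf H_1\|_{L^d}$ small and $\mathbf H_2\in L^\infty$. The term involving $\mathbf H_1$ is absorbed into the principal part, and the $\mathbf H_2$ term is compact from $H^{1,2}_0$ into $H^{-1}$ by Rellich. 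The zero-order term is handled the same way: split $c=c_1+c_2$ with $\|c_1\|_{L^{d/2}}$ small and $c_2$ bounded, or simply use $c\ge0$, which makes that term nonnegative. Adding a large multiple $\mu\int\eta u\psi$ yields a coercive form, and the Fredholm alternative then reduces existence to uniqueness.

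The main obstacle is uniqueness: we must show that $B(u,\psi)=0$ for all $\psi$ forces $u=0$. Here we use the sign $c+\alpha\ge0$ and argue by Stampacchia truncation, as in the classical maximum principle for drifts in $L^d$ (cf.\ \cite{GT01}). Suppose $\sup u>0$. For $0<t<\sup u$, test with $(u-t)^+$. The zero-order term is then nonnegative, so
\[
\gamma\lambda\int|\nabla(u-t)^+|^2\le \Lambda\int_{A_t}|\mathbf H||\nabla (u-t)^+|(u-t)^+ \le \Lambda\|\mathbf H\|_{L^d(A_t)}\|\nabla(u-t)^+\|_{L^2}\,C_S\|\nabla(u-t)^+\|_{L^2},
\]
where $A_t=\{\nabla u\ne0,\,u>t\}$. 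This gives $\|\mathbf H\|_{L^d(A_t)}\ge \gamma\lambda/(\Lambda C_S)$ whenever $(u-t)^+\ne0$. Letting $t\uparrow\sup u$, the sets $A_t$ shrink to a null set (since $\nabla u=0$ a.e.\ on $\{u=\sup u\}$), which contradicts absolute continuity of the integral. Hence $u\le0$, and applying the same argument to $-u$ gives $u=0$. Finally, existence of the weighted solution transfers back to \eqref{maineqoursend} through Theorem \ref{divfreeliktrans}. Uniqueness also transfers, because the difference of two solutions of \eqref{maineqoursend} solves the homogeneous problem, and hence the homogeneous weighted problem, whose only solution is zero.
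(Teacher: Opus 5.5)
Your proposal is correct and follows the same route as the paper. Both transform to the weighted problem via Theorem \ref{divfreeliktrans} (uniformly elliptic coefficient $\gamma\eta$, drift $\eta\mathbf H\in L^d$, potential $\eta(c+\alpha)\in L^{d/2}$), obtain existence and uniqueness there from classical Stampacchia theory, and transfer both back. The paper simply cites that classical result (\cite{S65}, via \cite[Theorem 3.4]{L26y}), whereas you sketch its proof: a Fredholm alternative after splitting $\mathbf H$, and uniqueness by the truncation argument with $(u-t)^+$. Your normalization $k=1$ in place of the paper's choice $\eta\ge 1$ makes no difference.
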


\begin{proof}
Set $\eta:=\frac{1}{\exp(-\sup_U \frac{\phi}{\gamma})} \exp(-\frac{\phi}{\gamma})$. Then, by the classical existence result from \cite{S65} (see \cite[Theorem 3.4]{L26y}), there exists a weak solution $u$ to
\begin{equation} \label{mainweighte}
\left\{
\begin{aligned}
-{\rm div}(\gamma \eta \nabla u) + \langle \eta \mathbf{H}, \nabla u\rangle + \eta  (c+\alpha)\,u \;&=\; \eta f
&\quad& \text{in } U,\\
u \;&=\; 0
&\quad& \text{on } \partial U.
\end{aligned}
\right.
\end{equation}
Then, by Theorem \ref{divfreeliktrans}, $u$ is a weak solution to \eqref{maineqoursend}. To show the uniqueness, let $v$ be a weak solution to \eqref{maineqoursend}. By Theorem \ref{divfreeliktrans}, $v$ is a weak solution to \eqref{mainweighte}. By the uniqueness result in \cite[Theorem 3.4]{L26y}, $u=v$ in $H^{1,2}_0(U)$, as desired.
\end{proof}

\noindent
Although the above theorem yields existence and uniqueness results under highly general drift coefficients, the associated quantitative estimates remain somewhat restricted (see Introduction in \cite{L26c}). To overcome this, we can derive explicit quantitative estimates by strengthening the integrability condition on the drift from $\mathbf{H} \in L^d(U, \mathbb{R}^d)$ to $\mathbf{H} \in L^p(U, \mathbb{R}^d)$ with $p \in (d, \infty)$, while significantly relaxing the assumption on the zero-order coefficient $c \in L^{\frac{d}{2}}(U)$.
\begin{lem} \label{existinvari}
Assume that {\bf (Hy)} holds. Set $\eta:=\frac{1}{\exp(-\sup_{U} \frac{\phi}{\gamma})} \exp(-\frac{\phi}{\gamma})$. Extend $\eta \equiv 1$ on $\mathbb{R}^d \setminus U$ and  $\mathbf H$ to $\mathbb R^d$ by setting $\mathbf H=0$ on $\mathbb R^d\setminus U$. Then, the following hold:
\begin{itemize}
\item[(i)] Let $x_1 \in U$. Then, there exists $\rho \in H^{1,2}(B_{4r}(x_0)) \cap C(B_{4r}(x_0))$
with $\rho(x) > 0$ for all $x \in B_{4r}(x_0)$ and $\rho(x_1)=1$ such that
\begin{equation*}
\int_{B_{4r}(x_0)} \bigl\langle \gamma \eta \nabla \rho + \rho \eta \mathbf{H}, \nabla \varphi \bigr\rangle \, dx = 0,
\qquad \text{for all } \varphi \in C_0^{\infty}(B_{4r}(x_0)).
\end{equation*}

\item[(ii)] Let $\rho$ be as in (i). Then, there exists a constant $K_1 \ge 1$ that depends only on $d$, $\frac{\sup_{U} \phi- \inf_{U} \phi}{\gamma}$, $r$, $p$, $\|\mathbf{H}\|_{L^p(U)}$ such that
\[
\max_{\overline{B}_{3r}(x_0)} \rho \le K_1 \min_{\overline{B}_{3r}(x_0)} \rho,
\]
and hence
\[
1 \le \max_{\overline{U}} \rho \le K_1 \min_{\overline{U}} \rho \le K_1.
\]
\end{itemize}
\end{lem}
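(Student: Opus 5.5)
The plan is to reduce both parts to existing results for divergence-form operators with bounded, uniformly elliptic diffusion and an $L^p$ drift, $p>d$. The relevant result is the construction of a positive continuous "invariant density" in \cite{L25jm} (or its counterpart in \cite{L26c}); the point is to check that the weighted coefficient $A:=\gamma\eta\,\mathrm{id}$ and drift $\eta\mathbf H$ fall into that framework.

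First I record the bounds on $\eta$. Since $\eta=\exp\bigl((\sup_U\phi-\phi)/\gamma\bigr)$ on $U$ and $\eta\equiv1$ outside $U$, we have, a.e. on $B_{4r}(x_0)$,
\[
1\le \eta\le M:=\exp\Bigl(\tfrac{\sup_U\phi-\inf_U\phi}{\gamma}\Bigr).
\]
Hence $\gamma\eta$ is measurable with $\gamma\le\gamma\eta\le\gamma M$, so the diffusion matrix $\gamma\eta\,\mathrm{id}$ is uniformly elliptic and bounded on $B_{4r}(x_0)$. Its ellipticity ratio is $M$, which depends only on $(\sup_U\phi-\inf_U\phi)/\gamma$. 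The drift is $\eta\mathbf H\in L^p(B_{4r}(x_0),\mathbb R^d)$, with $\|\eta\mathbf H\|_{L^p}\le M\|\mathbf H\|_{L^p(U)}$ and $p>d$. No regularity of $\eta$ is needed, since only measurable bounded diffusion enters the adjoint (divergence-form) equation.

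For (i), I would invoke the existence theorem for the equation $\mathrm{div}(A\nabla\rho+\rho\,\mathbf B)=0$ on a ball, with $A$ measurable and uniformly elliptic and $\mathbf B\in L^p$, $p>d$. This result is in \cite{L25jm} and goes back to Bogachev--Krylov--R\"ockner-type arguments \cite{BKR01}. It yields $\rho\in H^{1,2}(B_{4r}(x_0))\cap C(B_{4r}(x_0))$ with $\rho>0$. If the cited result is stated on a larger ball, I restrict to $B_{4r}(x_0)$. If one wants a self-contained route instead, one solves a Dirichlet problem on $B_{5r}(x_0)$ by Fredholm/Lax--Milgram arguments with positive data, obtains positivity from the weak Harnack inequality, and obtains continuity from De Giorgi--Nash--Moser, which applies because $\mathbf B\in L^p$ with $p>d$. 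The normalization $\rho(x_1)=1$ then follows from linearity: divide by $\rho(x_1)>0$.

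For (ii), I apply the Harnack inequality for nonnegative solutions of $\mathrm{div}(A\nabla\rho+\rho\mathbf B)=0$ (Trudinger's or Gilbarg--Trudinger Theorem 8.20/8.21 type, valid for lower-order coefficients in $L^p$, $p>d$). Applied on $B_{4r}(x_0)\supset\overline B_{3r}(x_0)$, it gives
\[
\max_{\overline B_{3r}}\rho\le K_1\min_{\overline B_{3r}}\rho,
\]
with $K_1$ depending on $d$, the ellipticity ratio, $r$, $p$ and $\|\eta\mathbf H\|_{L^p}/\gamma$. The main obstacle is showing that the Harnack constant depends only on the listed quantities. The constant in Gilbarg--Trudinger depends on the ellipticity bounds $\gamma,\gamma M$ and on $\|\eta\mathbf H\|_{L^p}$ separately. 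To remove the dependence on $\gamma$, I would divide the equation by $\gamma$. This gives diffusion $\eta\,\mathrm{id}$ with bounds in $[1,M]$ and drift $\eta\mathbf H/\gamma$, whose $L^p$ norm is at most $M\|\mathbf H\|_{L^p}$ because $\gamma\ge1$. Monotonicity of the Harnack constant in the drift norm then gives a $K_1$ depending only on $d$, $(\sup\phi-\inf\phi)/\gamma$, $r$, $p$ and $\|\mathbf H\|_{L^p(U)}$. Covering $\overline B_{3r}$ by finitely many balls, with the number controlled by $d$ after scaling by $r$, turns the interior Harnack inequality into the stated one.

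Finally, $\overline U\subset B_r(x_0)\subset\overline B_{3r}$ and $x_1\in U$ give
\[
\min_{\overline U}\rho\le\rho(x_1)=1\le\max_{\overline U}\rho\le\max_{\overline B_{3r}}\rho\le K_1\min_{\overline B_{3r}}\rho\le K_1\min_{\overline U}\rho\le K_1.
\]
We may take $K_1\ge1$.
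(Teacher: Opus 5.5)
Your proposal is correct and follows the paper's proof. The paper also bounds $1\le\eta\le\exp\bigl((\sup_U\phi-\inf_U\phi)/\gamma\bigr)$, divides the relation by $\gamma$, uses $\|\eta\mathbf H/\gamma\|_{L^p}\le \exp\bigl((\sup_U\phi-\inf_U\phi)/\gamma\bigr)\|\mathbf H\|_{L^p(U)}$ (since $\gamma\ge1$), and applies \cite[Theorem 2.5]{L25jm} with diffusion $\eta$ and drift $\eta\mathbf H/\gamma$; that single theorem yields both the positive continuous $\rho$ and the Harnack bound, which your classical Harnack-plus-covering step and your final chain of inequalities over $\overline U$ simply unpack.
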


\begin{proof}
Let us check the lower and upper bounds for $\eta$. Observe that
$$
1 \leq \eta \leq  \exp \left(\frac{\sup_{U} \phi - \inf_{U} \phi}{\gamma}\right) \quad \text{ in $\mathbb{R}^d$}.
$$
Thus, (i) and (ii) follow from \cite[Theorem 2.5]{L25jm}, after dividing the above divergence relation by $\gamma$ and applying the theorem with the diffusion coefficient $\eta$ and the drift $\eta\mathbf H/\gamma$. Indeed, the ellipticity ratio of $\eta$ is controlled by
$$
\exp\left(\frac{\sup_U\phi-\inf_U\phi}{\gamma}\right),
$$
and, since $\gamma\geq1$,
$$
\left\|\frac{\eta\mathbf H}{\gamma}\right\|_{L^p(U)}
\leq
\exp\left(\frac{\sup_U\phi-\inf_U\phi}{\gamma}\right)
\|\mathbf H\|_{L^p(U)}.
$$
Therefore, the Harnack constant in \cite[Theorem 2.5]{L25jm} can be chosen as a constant $K_1\geq1$ depending only on $d$, $\frac{\sup_U\phi-\inf_U\phi}{\gamma}$, $r$, $p$, and $\|\mathbf H\|_{L^p(U)}$, as stated in (ii).
\end{proof}

\begin{thm}\label{thm:1.1}
Suppose that {\bf (Hy)} holds. Define $\hat d$ by setting $\hat d=d$ when $d\ge3$, while for $d=2$ we choose an arbitrary $\hat d\in(2,\infty)$.
Let $c\in L^{1}(U)$ with $c\ge0$ and $\alpha \in [0,\infty)$ be a constant. 
Assume further that one of the following conditions is satisfied:
\begin{itemize}
\item[(a)] $d\ge2$ and
$f\in L^{q}(U)$ for some $q>\frac d2$.
\item[(b)] $d \geq 3$ and 
$f\in L^{\frac{2d}{d+2}}(U)$ and $c\in L^{\frac{2d}{d+2}}(U)$.
\end{itemize}
Then problem \eqref{maineqoursend} admits a unique weak solution $u\in H^{1,2}_0(U)$.
In the case of (a), we additionally have $u\in L^\infty(U)$ and
\[
\|u\|_{L^\infty(U)} \le C_2\,\|f\|_{L^{q}(U)},
\]
where $C_2>0$ depends on $d,q,p,|U|,\gamma,\alpha,\|\mathbf H\|_{L^p(U)}$, and $\sup_{U} \phi - \inf_U \phi$.
Furthermore, whenever either {(a)} or {(b)} holds, the solution $u$ enjoys the following estimates:
\begin{itemize}
\item[(i)] If $f\in L^{\frac{2\hat d}{\hat d+2}}(U)$, then
\begin{equation} \label{quantitiestim-1}
\|\nabla u\|_{L^{2}(U)}
\le \frac{K_{1}\hat k}{\gamma}\,
\|f\|_{L^{\frac{2\hat d}{\hat d+2}}(U)}
\end{equation}
and
\begin{equation} \label{quantitiestim0}
\|u\|_{L^{2}(U)}
\le
\left(\frac{d^{2}\gamma}{8(d-1)^{2}|U|^{\frac{2}{d}}}+\alpha\right)^{-\frac12}
\left(\frac{K_{1}^{2}\hat k^{2}}{2\gamma}\right)^{\frac12}
\|f\|_{L^{\frac{2\hat d}{\hat d+2}}(U)},
\end{equation}
where $K_{1}\ge 1$ is a constant as in Lemma \ref{existinvari} and
\[
\hat k:=\frac{\hat d}{\hat d-2}\,|U|^{\frac12-\frac1{\hat d}} \quad \text{if } d=2
\qquad\text{and}\qquad
\hat k:=\frac{2(d-1)}{d-2} \quad \text{if } d\ge 3.
\]

\item[(ii)] If $f\in L^{2}(U)$, then
\begin{equation}  \label{quantitiestim1}
\|u\|_{L^{2}(U)}
\le
K_{1}\left(\frac{d^{2}\gamma}{4(d-1)^{2}|U|^{\frac{2}{d}}}+\alpha\right)^{-1}
\|f\|_{L^{2}(U)}
\end{equation}
and
\begin{equation} \label{quantitiestim2}
\|u\|_{L^{2}(U)}
\le
K_{1}^{\frac12}\left(\frac{d^{2}\gamma}{4K_{1}(d-1)^{2}|U|^{\frac{2}{d}}}+\alpha\right)^{-1}
\|f\|_{L^{2}(U)}.
\end{equation}

\item[(iii)] Let
\begin{align*}
&C:=\min\Biggl(
\left(\frac{d^{2}\gamma}{8(d-1)^{2}|U|^{\frac{2}{d}}}+\alpha\right)^{-\frac12}
\left(\frac{K_{1}^{2}\hat k^{2}}{2\gamma}\right)^{\frac12}|U|^{\frac1{\hat d}},
\;
K_{1}\left(\frac{d^{2}\gamma}{4(d-1)^{2}|U|^{\frac{2}{d}}}+\alpha\right)^{-1}, \\
& \qquad \qquad \qquad \qquad K_{1}^{\frac12}\left(\frac{d^{2}\gamma}{4K_{1}(d-1)^{2}|U|^{\frac{2}{d}}}+\alpha\right)^{-1}
\Biggr),
\end{align*}
where $\hat k$ is defined as in (i). If $f\in L^{2}(U)$, then \eqref{quantitiestim3} holds.
\end{itemize}
\end{thm}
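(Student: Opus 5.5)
The proof transfers everything to the weighted problem \eqref{mainweighte} with $\eta:=\exp(\sup_U\frac{\phi}{\gamma})\exp(-\frac{\phi}{\gamma})$, exactly as in the proof of Theorem \ref{stamconnec}. Theorem \ref{divfreeliktrans} applies with $c$ replaced by $c+\alpha\in L^1(U)$. It shows that $u\in H^{1,2}_0(U)$ with $cu\in L^1(U)$ is a weak solution to \eqref{maineqoursend} if and only if it is a weak solution to \eqref{mainweighte}. The transformed equation has diffusion matrix $\gamma\eta\,\mathrm{Id}$ with $\gamma\le\gamma\eta\le\gamma e^{(\sup_U\phi-\inf_U\phi)/\gamma}$. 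Its drift is $\eta\mathbf H\in L^p(U,\mathbb R^d)$, its zero-order coefficient is $\eta(c+\alpha)$ with $\eta c\in L^1(U)$, $\eta c\ge0$ (and $\eta c\in L^{\frac{2d}{d+2}}(U)$ in case (b)), and its source is $\eta f$, which has the same integrability as $f$. Hence existence and uniqueness for \eqref{maineqoursend} in cases (a) and (b) follow from the well-posedness theorem of \cite{L25jm} for this class of equations. The $L^\infty$ bound in case (a) follows from the corresponding bound there, since $\|\eta f\|_{L^q}\le e^{(\sup\phi-\inf\phi)/\gamma}\|f\|_{L^q}$ and the constants of the transformed problem depend only on $d,q,p,|U|,\gamma,\alpha,\|\mathbf H\|_{L^p}$ and the oscillation of $\phi$.

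For the quantitative estimates I would use the density $\rho$ from Lemma \ref{existinvari} and the divergence-free device of \cite{L26c}. The key identity to aim for is obtained by testing \eqref{mainweighte} with $\rho u$ (truncated, to justify it for $u\notin L^\infty$, then passing to the limit via monotone convergence on the $c$-term). Since $\gamma\eta\nabla\rho+\rho\eta\mathbf H$ is divergence-free, the drift term $\int\langle\eta\mathbf H,\nabla u\rangle\rho u$ combines with $\int\langle\gamma\eta\nabla u,\nabla\rho\rangle u$ into $\frac12\int\langle\gamma\eta\nabla\rho+\rho\eta\mathbf H,\nabla(u^2)\rangle=0$. This leaves
\[
\int_U\gamma\eta\rho|\nabla u|^2\,dx+\int_U\eta\rho(c+\alpha)u^2\,dx=\int_U\eta\rho f u\,dx .
\]
Dropping the nonnegative $c$-term explains why the constants are independent of $\|c\|_{L^1}$. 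From here the estimates in (i) and (ii) follow as in \cite[Theorem 1.1]{L26c}. For (i), use Sobolev (Gagliardo--Nirenberg with constant $\hat k$, or Hölder plus Sobolev in $\hat d$ when $d=2$) on the right-hand side and Harnack $\max\rho\le K_1\min\rho$ to get \eqref{quantitiestim-1}, then apply the Hardy-type/Poincaré inequality with constant $\frac{d^2}{4(d-1)^2|U|^{2/d}}$ on $U\subset B_r$ together with Young's inequality for \eqref{quantitiestim0}. For (ii), apply Cauchy--Schwarz to $\int\eta\rho fu$ in the weighted $L^2(\eta\rho\,dx)$, or in unweighted $L^2$, yielding the two versions \eqref{quantitiestim1} and \eqref{quantitiestim2} according to where $K_1$ is absorbed.

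The main obstacle is the weight $\eta$: the constants must depend on $\phi$ only through its oscillation, and $K_1$ should absorb it. I would handle this by normalizing $\eta\ge1$ and noting that $\eta$ enters both sides of the identity. Ratios $\max(\eta\rho)/\min(\eta\rho)$ are controlled by $K_1$ only if the Harnack constant is applied to $\eta\rho$ instead of $\rho$; I would first check whether this is possible, and otherwise enlarge $K_1$ by the factor $e^{(\sup\phi-\inf\phi)/\gamma}$, which is permitted by the dependence stated in Lemma \ref{existinvari}. The lower bound $\gamma\eta\ge\gamma$ yields the $\gamma$ factors as written. Finally, (iii) is the minimum of the three bounds, the first converted via Hölder $\|f\|_{L^{2\hat d/(\hat d+2)}}\le|U|^{1/\hat d}\|f\|_{L^2}$.
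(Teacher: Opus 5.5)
Your proposal is correct and follows the paper's route. You transform with $\eta$ and use Theorem \ref{divfreeliktrans} in both directions to transfer well-posedness (you cite \cite{L25jm} where the paper cites \cite[Theorem 1.1]{L26c}), and you obtain (i)--(iii) from the $\rho$-weighted energy identity behind \cite[Theorem 1.1]{L26c} together with Lemma \ref{existinvari}. Your remark is also correct: the effective weight is $\eta\rho$ (equivalently, the transformed source is $\eta f$ rather than $f$), so $K_1$ must absorb an extra factor $e^{(\sup_U\phi-\inf_U\phi)/\gamma}$, which is a point the paper's proof leaves implicit when it applies \cite[Theorem 1.1]{L26c} to \eqref{maineqoutrans}.
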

\begin{proof}
Set
$$
\eta:=\frac{1}{\exp(-\sup_U \frac{\phi}{\gamma})} \exp\left(-\frac{\phi}{\gamma}\right).
$$
Extend $\eta \equiv 1$ on $\mathbb{R}^d\setminus U$. Then,
$$
1\leq \eta \leq \exp\left(\frac{\sup_U\phi-\inf_U\phi}{\gamma}\right) \quad \text{in }\mathbb{R}^d.
$$
In particular, the transformed diffusion coefficient $\gamma\eta$ is bounded and uniformly elliptic. Moreover, since $\eta$ is bounded above and below by positive constants, we have
$$
\eta\mathbf H\in L^p(U,\mathbb{R}^d), \qquad \eta c\in L^1(U),
$$
with $\eta c\geq0$, and multiplication by $\eta$ preserves all the integrability assumptions imposed below on $f$ and $c$. We first consider case \textnormal{(a)}. Since $f\in L^q(U)$ for some $q>\frac d2$, we also have
$$
\eta f\in L^q(U).
$$
Thus, the coefficients and the source term in the transformed equation satisfy the assumptions of \cite[Theorem 1.1]{L26c}. Hence, there exists a unique weak solution $u\in H^{1,2}_0(U)$ of
\begin{equation}\label{maineqoutrans}
\left\{
\begin{aligned}
-\operatorname{div}(\gamma\eta\nabla u)+\langle\eta\mathbf H,\nabla u\rangle+\eta(c+\alpha)u
&=\eta f
&&\quad \text{in }U,\\
u&=0
&&\quad \text{on }\partial U.
\end{aligned}
\right.
\end{equation}
Moreover, $u\in L^\infty(U)$. Since $\eta$ is bounded and its upper bound depends only on $\frac{\sup_U\phi-\inf_U\phi}{\gamma}$, the corresponding estimate from \cite[Theorem 1.1]{L26c} yields
$$
\|u\|_{L^\infty(U)} \leq C_2\|f\|_{L^q(U)},
$$
where $C_2>0$ depends on $d,q,p,|U|,\gamma,\alpha,\|\mathbf H\|_{L^p(U)}$, and $\sup_U\phi-\inf_U\phi$. By Theorem \ref{divfreeliktrans}, every weak solution of \eqref{maineqoutrans} is a weak solution of \eqref{maineqoursend}. Therefore, the above function $u$ is a weak solution of
\eqref{maineqoursend}. Conversely, if $v$ is any weak solution of \eqref{maineqoursend}, then Theorem \ref{divfreeliktrans} implies that
$v$ is a weak solution of \eqref{maineqoutrans}. The uniqueness statement in \cite[Theorem 1.1]{L26c} therefore yields $u=v$ in $H^{1,2}_0(U)$. This proves existence and uniqueness in case \textnormal{(a)}. Next, assume \textnormal{(b)}. In this case,
$f,c\in L^{\frac{2d}{d+2}}(U)$, and the boundedness of $\eta$ implies
$\eta f,\eta c\in L^{\frac{2d}{d+2}}(U)$.
Hence, the corresponding assumptions of \cite[Theorem 1.1]{L26c} are again satisfied for the transformed problem
\eqref{maineqoutrans}. We therefore obtain a unique weak solution $u\in H^{1,2}_0(U)$ of \eqref{maineqoutrans}. Applying Theorem \ref{divfreeliktrans} in both directions as above, we conclude that $u$ is the unique weak solution of \eqref{maineqoursend}. This proves the well-posedness assertion in case \textnormal{(b)}. It remains to verify the quantitative estimates. By Lemma \ref{existinvari}, the transformed coefficients $\gamma\eta$ and $\eta\mathbf H$ admit a positive function $\rho$ satisfying the
corresponding divergence relation, together with the Harnack-type bound
$$
\max_{\overline{B}_{3r}(x_0)}\rho \leq K_1 \min_{\overline{B}_{3r}(x_0)}\rho,
$$
where $K_1\geq1$ depends only on $d$, $\frac{\sup_U\phi-\inf_U\phi}{\gamma}$, $r$, $p$, and
$\|\mathbf H\|_{L^p(U)}$. Thus, the quantitative estimates in \cite[Theorem 1.1]{L26c}, applied to the transformed problem
\eqref{maineqoutrans}, give \eqref{quantitiestim-1} and \eqref{quantitiestim0} whenever
$f\in L^{\frac{2\hat d}{\hat d+2}}(U)$, and give \eqref{quantitiestim1} and \eqref{quantitiestim2} whenever
$f\in L^2(U)$. Taking the minimum of the corresponding three constants then gives \eqref{quantitiestim3}. This completes the proof.
\end{proof}

\begin{cor}\label{cor:aposteriori}
Suppose that the assumptions of Theorem \ref{thm:1.1} hold, and let $u$ be the unique weak solution to \eqref{maineqoursend}. Let
$$
u_\theta\in C^\infty(\overline{U})\cap H^{1,2}_0(U),
$$
and define its residual by
$$
L_\theta:=f+\operatorname{div}(\gamma\nabla u_\theta)-\langle\nabla\phi+\mathbf H,\nabla u_\theta\rangle -(c+\alpha)u_\theta.
$$
Assume that $L_\theta\in L^2(U)$ and that one of the conditions {\rm (a)} or {\rm (b)} in Theorem \ref{thm:1.1} is satisfied with $f$ replaced by $L_\theta$. Then
$$
\|u-u_\theta\|_{L^2(U)}
\leq
C\|L_\theta\|_{L^2(U)},
$$
where $C$ is the constant defined in Theorem \ref{thm:1.1}{\rm (iii)}.

\end{cor}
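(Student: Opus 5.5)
The plan is to show that the error $w:=u-u_\theta$ is the unique weak solution of \eqref{maineqoursend} with source $L_\theta$ in place of $f$, and then to apply Theorem \ref{thm:1.1}(iii) to $w$.

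\textbf{Step 1: the equation for $u_\theta$.} Since $u_\theta\in C^\infty(\overline U)$, its gradient $\nabla u_\theta$ is bounded. Hence
$$\langle\nabla\phi+\mathbf H,\nabla u_\theta\rangle\in L^2(U)+L^p(U)\subset L^1(U)$$
and $c\,u_\theta\in L^1(U)$. The residual $L_\theta$ is therefore a well-defined $L^1$ function, and it lies in $L^2(U)$ by assumption. Integrating by parts against $\varphi\in C_0^\infty(U)$ (legitimate because $u_\theta$ is smooth up to the boundary and $\varphi$ has compact support) gives
$$\int_U \Big(\langle\gamma\nabla u_\theta,\nabla\varphi\rangle+\langle\nabla\phi+\mathbf H,\nabla u_\theta\rangle\varphi+(c+\alpha)u_\theta\varphi\Big)dx=\int_U (f-L_\theta)\varphi\,dx .$$

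\textbf{Step 2: the equation for the error.} Subtract this identity from the weak formulation satisfied by $u$. Both $u$ and $u_\theta$ lie in $H^{1,2}_0(U)$, and $cu$, $cu_\theta$ both lie in $L^1(U)$, so $cw\in L^1(U)$ and $w\in H^{1,2}_0(U)$. The difference is exactly the weak formulation of \eqref{maineqoursend} with right-hand side $L_\theta$. Thus $w$ is a weak solution of that problem in the sense defined in the Introduction.

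\textbf{Step 3: apply Theorem \ref{thm:1.1}.} By hypothesis, condition (a) or (b) holds with $f$ replaced by $L_\theta$. So Theorem \ref{thm:1.1} provides a unique weak solution of the problem with source $L_\theta$, and by Step 2 this solution is $w$. Since $L_\theta\in L^2(U)$, part (iii) gives
$$\|u-u_\theta\|_{L^2(U)}=\|w\|_{L^2(U)}\le C\|L_\theta\|_{L^2(U)}.$$
The constant $C$ depends only on $K_1,\gamma,\alpha,d,|U|,\hat d$. These quantities are unchanged by replacing $f$ with $L_\theta$, because $K_1$ from Lemma \ref{existinvari} does not involve $f$ or $c$.

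\textbf{Main obstacle.} The only delicate point is the integrability bookkeeping in Steps 1 and 2: checking that $cu_\theta\in L^1(U)$ and that the drift terms are integrable, so that $w$ is an admissible weak solution. Both follow from the boundedness of $u_\theta$ and $\nabla u_\theta$ on $\overline U$.
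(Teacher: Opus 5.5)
Your proposal is correct and follows essentially the same route as the paper. You set $w=u-u_\theta$, check that $w\in H^{1,2}_0(U)$ and $cw\in L^1(U)$, show that $w$ is a weak solution of the problem with source $L_\theta$, and apply Theorem~\ref{thm:1.1}(iii). Your explicit integration by parts for $u_\theta$ and your explicit appeal to uniqueness, which identifies $w$ with the solution furnished by Theorem~\ref{thm:1.1}, only spell out details the paper leaves implicit.
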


\begin{proof}

Set
$$
e_\theta:=u-u_\theta.
$$
Since $u\in H^{1,2}_0(U)$ and
$u_\theta\in C^\infty(\overline U)\cap H^{1,2}_0(U)$, we have
$e_\theta\in H^{1,2}_0(U)$. Moreover, $u_\theta$ is bounded on $\overline U$, and hence $cu_\theta\in L^1(U)$. Since $cu\in L^1(U)$, it follows that $ce_\theta\in L^1(U)$.

By the definition of $L_\theta$ and the weak formulation satisfied by $u$, $e_\theta$ is a weak solution to
\begin{equation}\label{maineqourserror}
\left\{
\begin{aligned}
-\operatorname{div}(\gamma\nabla e_\theta)
+\langle\nabla\phi+\mathbf H,\nabla e_\theta\rangle
+(c+\alpha)e_\theta
&=L_\theta
&&\quad \text{in }U,\\
e_\theta&=0
&&\quad \text{on }\partial U.
\end{aligned}
\right.
\end{equation}
Therefore, Theorem \ref{thm:1.1}{\rm (iii)}, applied to \eqref{maineqourserror}, yields
$$
\|e_\theta\|_{L^2(U)}
\leq
C\|L_\theta\|_{L^2(U)},
$$
as desired.
\end{proof}

\begin{rmk}

Corollary \ref{cor:aposteriori} provides a direct a posteriori error estimate for residual-based approximations. In particular, if $u_\theta$ is a physics-informed neural-network (PINN) approximation as in Corollary \ref{cor:aposteriori} and the residual loss is defined by
$$
\mathcal J(\theta):=\|L_\theta\|_{L^2(U)}^2,
$$
then
$$
\|u-u_\theta\|_{L^2(U)}
\leq
C\,\mathcal J(\theta)^{1/2}.
$$
Thus, the residual loss provides a computable a posteriori bound for the $L^2$-error of the PINN approximation.

\end{rmk}

\begin{thm}\label{thm:interpolation}
Suppose that {\bf (Hy)} holds with $d \geq 3$ and let $\hat{p} \in (d, \infty)$. 
For any $r \in [2, d]$, let us define the interpolation exponent $k$ as
\[
    k := \frac{r(\hat{p}-2)}{2(\hat{p}-r)} \in [1, \infty).
\]
Assume that the lower-order term $c$ and the source term $f$ satisfy
\[
    c \in L^{\frac{2dk}{2dk-d+2}}(U) \quad \text{with } c \ge 0,
    \quad \text{and} \quad
    f \in L^{\frac{rd}{d+r}}(U).
\]
Let $\alpha \in [0,\infty)$ be a constant. 
Then, there exists a unique weak solution $u \in H^{1,2}_0(U) \cap L^{\frac{2dk}{d-2}}(U)$ to the equation \eqref{maineqoursend} such that $u$ satisfies the interpolated regularity estimate
\begin{equation} \label{interpolineq}
    \|u\|_{L^{\frac{2dk}{d-2}}(U)} \leq C_1^{\frac{1}{k}} C_2^{1-\frac{1}{k}} \|f\|_{L^{\frac{rd}{d+r}}(U)},
\end{equation}
where $C_1=\frac{4K_{1}(d-1)^2}{\gamma(d-2)^2}$ and $C_2>0$ is a constant as in Theorem \ref{thm:1.1} depending only on $d, q:=\frac{\hat{p}d}{d+\hat{p}},  p, |U|, \gamma, \alpha, \|\mathbf{H}\|_{L^{p}(U)}$ and $\sup_U \phi-\inf_{U} \phi$.
Furthermore, since $r \ge 2$ implies the embedding $L^{\frac{rd}{d+r}}(U) \hookrightarrow L^{\frac{2d}{d+2}}(U)$, the solution satisfies \eqref{quantitiestim-1} and \eqref{quantitiestim0}. Moreover, if in addition $f\in L^2(U)$, then \eqref{quantitiestim1}, \eqref{quantitiestim2}, and \eqref{quantitiestim3} also hold.
\end{thm}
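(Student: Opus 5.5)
We reduce everything to the transformed weighted problem \eqref{maineqoutrans} via Theorem \ref{divfreeliktrans}, exactly as in the proofs of Theorems \ref{stamconnec} and \ref{thm:1.1}. Set $\eta:=\exp(\sup_U\frac{\phi}{\gamma})\exp(-\frac{\phi}{\gamma})$, so that $1\le\eta\le \exp\bigl(\frac{\sup_U\phi-\inf_U\phi}{\gamma}\bigr)$. Multiplication by $\eta$ preserves every Lebesgue integrability class. Hence $\eta c\in L^{\frac{2dk}{2dk-d+2}}(U)$ with $\eta c\ge0$, $\eta f\in L^{\frac{rd}{d+r}}(U)$, $\eta\mathbf H\in L^p(U,\mathbb R^d)$, and $\gamma\eta$ is bounded and uniformly elliptic. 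The positive function $\rho$ of Lemma \ref{existinvari}, with Harnack constant $K_1$, is available for these transformed coefficients. The plan is to verify that the transformed data satisfy the hypotheses of the interpolation result \cite[Theorem 5.1]{L26y}. That result should give a unique weak solution $u\in H^{1,2}_0(U)\cap L^{\frac{2dk}{d-2}}(U)$ of \eqref{maineqoutrans} together with an estimate of the form $\|u\|_{L^{\frac{2dk}{d-2}}}\le C_1^{1/k}C_2^{1-1/k}\|\eta f\|_{L^{\frac{rd}{d+r}}}$.

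The two endpoint constants come from the two extreme cases of the interpolation. At $r=2$ (so $k=1$) one has the energy/Sobolev bound for the weighted problem. This is obtained by testing with $u\rho$ (or by using \eqref{quantitiestim-1} with $\hat k=\frac{2(d-1)}{d-2}$) and then applying the Sobolev inequality with constant $\frac{2(d-1)}{d-2}$, which yields $C_1=\frac{4K_1(d-1)^2}{\gamma(d-2)^2}$. At $r=\hat p$ one has the $L^\infty$ bound of Theorem \ref{thm:1.1}(a) with $q=\frac{\hat pd}{d+\hat p}>\frac d2$, which yields $C_2$. The dependence of $C_2$ on $\sup_U\phi-\inf_U\phi$ enters only through the bounds on $\eta$.

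For the constants, I would absorb the factor $\|\eta f\|\le e^{(\sup\phi-\inf\phi)/\gamma}\|f\|$ into $C_2$, or equivalently work with the normalization of $\eta$ used in the proof of \cite[Theorem 1.1]{L26c}. Some care is needed here: the weighted equation has $\gamma\eta$ rather than $\gamma$ in front of the gradient, and the right-hand side $\eta f$ is comparable to $f$. Because $\eta\ge1$, the ellipticity lower bound is still $\gamma$. I therefore expect the $C_1$ computed for $\gamma\eta$ to be no worse than the stated one, with the $\eta$ on the source handled in the same way as in Theorem \ref{thm:1.1}.

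Next, by Theorem \ref{divfreeliktrans}, $u$ is a weak solution of \eqref{maineqoursend}; here $cu\in L^1$ follows from Hölder's inequality, since $\frac{d-2}{2dk}+\frac{2dk-d+2}{2dk}=1$. Conversely, any weak solution $v$ of \eqref{maineqoursend} is a weak solution of \eqref{maineqoutrans}, so $v=u$ by the uniqueness in \cite[Theorem 5.1]{L26y}. This gives existence and uniqueness, and \eqref{interpolineq} transfers directly because $u$ is the same function in both problems.

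For the remaining estimates, note that $r\ge2$ gives $\frac{rd}{d+r}\ge\frac{2d}{d+2}$, so $f\in L^{\frac{2d}{d+2}}(U)$ on the bounded set $U$. We then apply the quantitative part of \cite[Theorem 1.1]{L26c} to \eqref{maineqoutrans}, exactly as in the last step of the proof of Theorem \ref{thm:1.1}. That step requires only a weak solution in $H^{1,2}_0(U)$, the function $\rho$, and $f$ in the appropriate space, so it yields \eqref{quantitiestim-1} and \eqref{quantitiestim0}, and, when $f\in L^2(U)$, also \eqref{quantitiestim1}, \eqref{quantitiestim2}, and \eqref{quantitiestim3}.

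The main obstacle is checking that the hypotheses of \cite[Theorem 5.1]{L26y} (and of \cite[Theorem 1.1]{L26c}) are met by the weighted problem with $c$ only in $L^{\frac{2dk}{2dk-d+2}}$ rather than in $L^{\frac{2d}{d+2}}$ or $L^q$. If needed, I would approximate $c$ by the truncations $c\wedge n$ and $f$ by bounded functions, apply the endpoint estimates uniformly in $n$, and pass to the limit, using weak compactness in $H^{1,2}_0$ and Fatou's lemma for the $L^{\frac{2dk}{d-2}}$ norm.
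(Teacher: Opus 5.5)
Your reduction to \eqref{maineqoutrans}, the use of \cite[Theorem 5.1]{L26y} for existence and \eqref{interpolineq}, the H\"older check that $cu\in L^1(U)$, and the transfer back via Theorem \ref{divfreeliktrans} all match the paper. The one difference there is that the paper takes uniqueness from \cite[Theorem 1.1(ii)]{L25jm}, which works with an $L^1$ zero-order term.

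The gap is in your paragraph on the remaining estimates. You apply the quantitative part of \cite[Theorem 1.1]{L26c}, equivalently Theorem \ref{thm:1.1}(i),(ii), directly to $u$. You justify this by asserting that it needs only an $H^{1,2}_0$ weak solution, $\rho$, and $f$ in the right space. Those estimates are only available under hypothesis (a) or (b), and both fail here for $r\in(2,d]$:
\begin{itemize}
\item Since $\frac{rd}{d+r}\le\frac d2$, hypothesis (a) fails.
\item Since $k>1$, we have $\frac{2dk}{2dk-d+2}<\frac{2d}{d+2}$, so in general $c\notin L^{\frac{2d}{d+2}}(U)$ and hypothesis (b) fails.
\end{itemize}
The estimates come from testing with (a multiple of) $u$. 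For a general solution with $c$ only in $L^{\frac{2dk}{2dk-d+2}}(U)$, the term $\int_U c\,u^2\,dx$ is not a priori integrable. This is exactly why the paper does not apply Theorem \ref{thm:1.1} to $u$ itself.

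The missing idea is to approximate in $f$ alone. Set $f_n:=(f\wedge n)\vee(-n)\in L^\infty(U)$ and keep $c$ unchanged.
\begin{itemize}
\item Since $c\in L^{\frac{2dk}{2dk-d+2}}(U)\subset L^1(U)$, Theorem \ref{thm:1.1}(a) applies and gives $u_n$. It satisfies \eqref{quantitiestim-1} and \eqref{quantitiestim0}, and also \eqref{quantitiestim1} and \eqref{quantitiestim2} if $f\in L^2(U)$, with $\|f_n\|_{L^s(U)}\le\|f\|_{L^s(U)}$ for every $s$.
\item \cite[Theorem 5.1]{L26y} bounds $u_n$ in $L^{\frac{2dk}{d-2}}(U)$ uniformly in $n$.
\item Extract weak limits in $H^{1,2}_0(U)$ and in $L^{\frac{2dk}{d-2}}(U)$.
\item Pass to the limit in the weak formulation. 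This uses $c\varphi\in L^{\frac{2dk}{2dk-d+2}}(U)=\bigl(L^{\frac{2dk}{d-2}}(U)\bigr)'$ and $f_n\to f$ in $L^{\frac{rd}{d+r}}(U)$.
\item Identify the limit with $u$ by uniqueness.
\item Conclude by weak lower semicontinuity of the norms.
\end{itemize}

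Your closing fallback points toward this, but as stated it is misdirected in three ways:
\begin{itemize}
\item Truncating $c$ is unnecessary.
\item The endpoint $L^\infty$ bound $C_2\|f_n\|_{L^q(U)}$ is not uniform in $n$ when $f\notin L^q(U)$. The uniform control must instead come from the interpolated $L^{\frac{2dk}{d-2}}$ bound together with the $H^{1,2}_0$ and $L^2$ bounds.
\item You never state the uniqueness step that transfers the bounds from the limit to $u$.
\end{itemize}
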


\begin{proof}
By \cite[Theorem 5.1]{L26y}, there exists a weak solution $u$ to \eqref{maineqoutrans} such that \eqref{interpolineq} holds.
By Theorem \ref{divfreeliktrans}, $u$ is also a weak solution to \eqref{maineqoursend}.
Uniqueness of weak solutions to \eqref{maineqoursend} follows from \cite[Theorem 1.1(ii)]{L25jm}.
Let $f_n := (f \wedge n)\vee(-n)$. 
Since $f_n \in L^\infty(U)$, Theorem \ref{thm:1.1} yields a unique weak solution $u_n \in H^{1,2}_0(U) \cap L^{\infty}(U)$ to \eqref{maineqoursend} with $f$ replaced by $f_n$. Moreover, estimate \textnormal{(i)} in Theorem \ref{thm:1.1} gives
\begin{equation}\label{estimappint2}
\|\nabla u_n\|_{L^{2}(U)}
\leq \frac{K_{1}\hat k}{\gamma}\, \|f_n\|_{L^{\frac{2 d}{d+2}}(U)}
\leq \frac{K_{1}\hat k}{\gamma}\, \|f\|_{L^{\frac{2 d}{d+2}}(U)},
\end{equation}
and
\begin{align}
\|u_n\|_{L^{2}(U)}
&\le \left(\frac{d^{2}\gamma}{8(d-1)^{2}|U|^{\frac{2}{d}}}+\alpha\right)^{-\frac12}
\left(\frac{K_{1}^{2}\hat k^{2}}{2\gamma}\right)^{\frac12}  
\|f_n\|_{L^{\frac{2 d}{d+2}}(U)}  \nonumber \\
&\le
\left(\frac{d^{2}\gamma}{8(d-1)^{2}|U|^{\frac{2}{d}}}+\alpha\right)^{-\frac12}
\left(\frac{K_{1}^{2}\hat k^{2}}{2\gamma}\right)^{\frac12}
\|f\|_{L^{\frac{2 d}{d+2}}(U)}, \label{estimappint3}
\end{align}
where $\hat k:=\frac{2(d-1)}{d-2}$. Moreover, by \cite[Theorem 5.1]{L26y},
\begin{align*}
    \|u_n\|_{L^{\frac{2dk}{d-2}}(U)} \leq C_1^{\frac{1}{k}} C_2^{1-\frac{1}{k}} \|f_n\|_{L^{\frac{rd}{d+r}}(U)} \leq C_1^{\frac{1}{k}} C_2^{1-\frac{1}{k}} \|f\|_{L^{\frac{rd}{d+r}}(U)}.
\end{align*}
Then, by \eqref{estimappint2} and the above estimate in $L^{\frac{2dk}{d-2}}(U)$, there exist
$v \in H^{1,2}_0(U) \cap L^{\frac{2dk}{d-2}}(U)$ and a subsequence of $(u_n)_{n\ge1}$, still denoted by $(u_n)_{n\ge1}$, such that
\[
u_n \rightharpoonup v \quad \text{weakly in } H^{1,2}_0(U) \text{ and in }  L^{\frac{2dk}{d-2}}(U) \quad \text{as } n\to\infty.
\]
Passing to the limit in the weak formulation is justified by the weak convergence
$u_n \rightharpoonup v$ in $H^{1,2}_0(U)$ and in $L^{\frac{2dk}{d-2}}(U)$, the fact that
$c\varphi \in L^{\frac{2dk}{2dk-d+2}}(U)$ for every $\varphi \in C_0^\infty(U)$,
and the strong convergence $f_n \to f$ in $L^{\frac{rd}{d+r}}(U)$.
Therefore, $v$ is a weak solution to \eqref{maineqoursend}.
By the uniqueness established above, we have $v=u$. Moreover, by the lower semicontinuity of the $L^2$-norm with respect to weak convergence,
\[
\|u\|_{L^2(U)}
\leq \liminf_{n \rightarrow \infty}\|u_n\|_{L^{2}(U)}
\le
\left(\frac{d^{2}\gamma}{8(d-1)^{2}|U|^{\frac{2}{d}}}+\alpha\right)^{-\frac12}
\left(\frac{K_{1}^{2}\hat k^{2}}{2\gamma}\right)^{\frac12}
\|f\|_{L^{\frac{2 d}{d+2}}(U)}.
\]
Next, assume that $f \in L^2(U)$. 
Then, since $\|f_n\|_{L^2(U)} \le \|f\|_{L^2(U)}$, the estimate \textnormal{(ii)} in Theorem \ref{thm:1.1} yields
\begin{equation*}
\|u_n\|_{L^{2}(U)}
\le
K_{1}\left(\frac{d^{2}\gamma}{4(d-1)^{2}|U|^{\frac{2}{d}}}+\alpha\right)^{-1}
\|f\|_{L^{2}(U)},
\end{equation*}
and
\begin{equation*}
\|u_n\|_{L^{2}(U)}
\le
K_{1}^{\frac12}\left(\frac{d^{2}\gamma}{4K_{1}(d-1)^{2}|U|^{\frac{2}{d}}}+\alpha\right)^{-1}
\|f\|_{L^{2}(U)}.
\end{equation*}
Invoking the weak lower semicontinuity of the $L^2$-norm, we obtain
$$
\|u\|_{L^{2}(U)} \le \liminf_{n \to \infty} \|u_n\|_{L^{2}(U)} \le K_{1}\left(\frac{d^{2}\gamma}{4(d-1)^{2}|U|^{\frac{2}{d}}}+\alpha\right)^{-1} \|f\|_{L^{2}(U)},
$$
and similarly,
$$\|u\|_{L^{2}(U)} \le \liminf_{n \to \infty} \|u_n\|_{L^{2}(U)} \le K_{1}^{\frac12}\left(\frac{d^{2}\gamma}{4K_{1}(d-1)^{2}|U|^{\frac{2}{d}}}+\alpha\right)^{-1} \|f\|_{L^{2}(U)}.
$$
This establishes the required estimates for $f \in L^2(U)$ and completes the proof.
\end{proof}

\centerline{}
\centerline{}
Haesung Lee\\
Department of Mathematics and Big Data Science,  \\
Kumoh National Institute of Technology, \\
Gumi, Gyeongsangbuk-do 39177, Republic of Korea, \\
E-mail: fthslt@kumoh.ac.kr, \; fthslt14@gmail.com

\begin{thebibliography}{XXX}

\bibitem{BCP22}
S. Berrone, C. Canuto, M. Pintore,
{\it Solving PDEs by variational physics-informed neural networks: an a posteriori error analysis},
Ann. Univ. Ferrara {\bf 68} (2022), 575--595.

\bibitem{BKR01}
V. I. Bogachev, N. V. Krylov, M. R\"ockner,
{\it On regularity of transition probabilities and invariant measures of singular diffusions under minimal conditions},
Comm. Partial Differential Equations {\bf 26} (2001), no.~11--12, 2037--2080.


\bibitem{BRS12}
V. I. Bogachev, M. R\"ockner, S. V. Shaposhnikov,
{\it On positive and probability solutions to the stationary Fokker--Planck--Kolmogorov equation},
Dokl. Math. {\bf 85} (2012), no.~3, 350--354.


\bibitem{F81}
M. Fukushima,
{\it On a stochastic calculus related to Dirichlet forms and distorted Brownian motions},
Phys. Rep. {\bf 77} (1981), no.~3, 255--262.

\bibitem{FOT11}
M. Fukushima, Y. Oshima, M. Takeda,
{\it Dirichlet forms and symmetric Markov processes},
2nd revised and extended ed., De Gruyter Studies in Mathematics, vol.~19,
De Gruyter, Berlin, 2011.

\bibitem{GGM13}
F. Giannetti, L. Greco, G. Moscariello,
{\it Linear elliptic equations with lower-order terms},
Differ. Integral Equ. {\bf 26} (2013), no.~5--6, 623--638.

\bibitem{GT01}
D. Gilbarg, N. S. Trudinger,
{\it Elliptic partial differential equations of second order},
Classics in Mathematics, Springer-Verlag, Berlin, 2001.

\bibitem{L24}
H. Lee,
{\it On the contraction properties for weak solutions to linear elliptic equations with $L^2$-drifts of negative divergence},
Proc. Amer. Math. Soc. {\bf 152} (2024), no.~5, 2051--2068.

\bibitem{L25jm}
H. Lee,
{\it Analysis of linear elliptic equations with general drifts and $L^1$-zero-order terms},
J. Math. Anal. Appl. {\bf 548} (2025), 129425.

\bibitem{L25}
H. Lee,
{\it On the stability of linear elliptic equations with $L^2$-drifts of negative divergence and singular zero-order terms},
Funct. Anal. Appl. {\bf 59} (2025), no.~4, 405--420.

\bibitem{L25ej}
H. Lee,
{\it Well-posedness of linear elliptic equations with $L^d$-drifts under divergence-type conditions},
Electron. Res. Arch. {\bf 33} (2025), no.~12, 7974--7998.

\bibitem{L26c}
H. Lee,
{\it Quantitative analysis for $L^2$-estimates in linear elliptic equations via divergence-free transformation},
J. Chungcheong Math. Soc. {\bf 39} (2026), no.~1, 33--45.

\bibitem{L26y}
H. Lee,
{\it Remarks on well-posedness for linear elliptic equations via divergence-free transformation},
East Asian Math. J. {\bf 42} (2026), no.~1, 35--50.

\bibitem{MR92}
Z. M. Ma, M. R\"ockner,
{\it Introduction to the theory of (non-symmetric) Dirichlet forms},
Universitext, Springer-Verlag, Berlin, 1992.


\bibitem{MM23}
S. Mishra, R. Molinaro,
{\it Estimates on the generalization error of physics-informed neural networks for approximating PDEs},
IMA J. Numer. Anal. {\bf 43} (2023), no.~1, 1--43.


\bibitem{Sha06}
S. V. Shaposhnikov,
{\it On Morrey's estimate for the Sobolev norms of solutions of elliptic equations},
Math. Notes {\bf 79} (2006), no.~3--4, 413--430.

\bibitem{S65}
G. Stampacchia,
{\it Le probl\`eme de Dirichlet pour les \'equations elliptiques du second ordre \`a coefficients discontinus},
Ann. Inst. Fourier (Grenoble) {\bf 15} (1965), no.~1, 189--258.

\bibitem{T73}
N. S. Trudinger,
{\it Linear elliptic operators with measurable coefficients},
Ann. Scuola Norm. Sup. Pisa Cl. Sci. (3) {\bf 27} (1973), 265--308.

\bibitem{YL24}
J. Yoo, H. Lee,
{\it Robust error estimates of PINN in one-dimensional boundary value problems for linear elliptic equations},
AIMS Math. {\bf 9} (2024), no.~10, 27000--27027.

\bibitem{ZMM25}
M. Zeinhofer, R. Masri, K.-A. Mardal,
{\it A unified framework for the error analysis of physics-informed neural networks},
IMA J. Numer. Anal. {\bf 45} (2025), no.~5, 2988--3025.




\end{thebibliography}
\end{document}